\numberwithin{equation}{section}
\def\HH{\mathcal H}
\def\NN{\mathbb N}
\def\rl{R_\ell}
\def\eps{\varepsilon}
\def\ex{\textnormal{ex}}
\theoremstyle{plain}
\newtheorem{theorem}{Theorem}
\newtheorem{thm}[theorem]{Theorem}
\newtheorem{lem}[theorem]{Lemma}
\newtheorem{prop}[theorem]{Proposition}
\newtheorem*{theorem*}{Theorem}
\theoremstyle{remark}
\theoremstyle{definition}
\theoremstyle{remark}
\numberwithin{equation}{section}
\begin{document}
\title{Multicolor list Ramsey numbers grow exponentially}
\author{Jacob Fox}
\author{Xiaoyu He}
\author{Sammy Luo}
\author{Max Wenqiang Xu}

\thanks{Department of Mathematics, Stanford University, Stanford, CA 94305, USA. \\
\indent Fox is supported by a Packard Fellowship and by NSF award DMS-1855635. Email: {\tt jacobfox@stanford.edu}. \\
\indent He is supported by NSF GRFP Grant DGE-1656518. Email: {\tt alkjash@stanford.edu}.\\
\indent Luo is supported by NSF GRFP Grant DGE-1656518. Email: {\tt sammyluo@stanford.edu}.\\
\indent Xu is supported by the Cuthbert C. Hurd Graduate Fellowship, Stanford. Email: {\tt maxxu@stanford.edu}.}
	\maketitle

\begin{abstract}
    The list Ramsey number $R_{\ell}(H,k)$, recently introduced by Alon, Buci\'c, Kalvari, Kuperwasser, and Szab\'o, is a list-coloring variant of the classical Ramsey number. They showed that if $H$ is a fixed $r$-uniform hypergraph that is not $r$-partite and the number of colors $k$ goes to infinity, $e^{\Omega(\sqrt{k})} \le R_{\ell} (H,k) \le e^{O(k)}$. We prove that  $R_{\ell}(H,k) = e^{\Theta(k)}$ if and only if $H$ is not $r$-partite.
\end{abstract}

\section{Introduction}

The $k$-color \emph{Ramsey number} $R(H,k)$ of an $r$-uniform hypergraph $H$ (henceforth {\it $r$-graph}) is the smallest $n$ such that any $k$-coloring of the edges of the complete $r$-graph $K_n^{(r)}$ on $n$ vertices contains a monochromatic copy of $H$.
One of the oldest problems in Ramsey theory is to determine the growth rate of $R(K_3,k)$ in terms of $k$. In showing the existence of nontrivial modular solutions to the Fermat equation, Schur~\cite{Schur} in 1916 implicitly proved that
\[
\Omega(2^k)= R(K_3,k)=O(k!).\]
See \cite{Nesetril} for a detailed discussion of the history of this result. 
While the exponential constant in the lower bound has since been improved, the upper bound has only been improved by a constant factor. Whether $R(K_3, k)$ grows exponentially or faster is a famous open problem. Among other connections, it is related to the study of Shannon capacity~\cite{EMT}.

In analogy with the well-studied list-coloring version of the chromatic number, a variant of $R(H,k)$ called the list Ramsey number was recently introduced on MathOverflow \cite{BOF} and by Alon, Buci\'c, Kalvari, Kuperwasser, and Szab\'o~\cite{ABKKS2019List}.
Let $L:E(K_n^{(r)})\to \binom{\NN}{k}$ assign a set of $k$ colors to each edge of $K_n^{(r)}$. An \emph{$L$-coloring of $K_n^{(r)}$} is an edge-coloring where each edge $e$ is given a color in $L(e)$. The $k$-color {\it list Ramsey number} $\rl(H,k)$ of an $r$-graph $H$ is defined as the smallest $n$ such that there is some $L:E(K_n^{(r)})\to \binom{\NN}{k}$ for which every $L$-coloring of $K_n^{(r)}$ contains a monochromatic copy of $H$. 

Taking $L$ to be constant across all edges, we see that $\rl(H,k)\le R(H,k).$ While no exponential upper bound is known for $R(H,k)$, the authors of \cite{ABKKS2019List} proved the striking Theorem \ref{thm:abkks-upper-lower} below, showing that $\rl(H,k)$ is at most exponential in $k$. 

Let $H$ be an $r$-graph. The Tur\'an density $\pi(H)$ of $H$ is defined as $\lim_{n\rightarrow \infty} \ex(n,H)\binom{n}{r}^{-1}$, where $\ex(n,H)$ is the maximum number of edges in an $H$-free $r$-graph on $n$ vertices. The chromatic number $ \chi(H)$ is the smallest integer $k$ such that there is a $k$-vertex-coloring of $H$ without monochromatic edges. If $H$ has at least two edges, 
let \[
m(H):= \max_{H'\subseteq H,e(H')>1}\frac{e(H')-1}{v(H')-r}.
\]

\begin{thm}[{Alon, Buci\'c, Kalvari, Kuperwasser, and Szab\'o \cite[Theorems 5 and 6]{ABKKS2019List}}]
\label{thm:abkks-upper-lower} If $H$ is an $r$-graph that is not $r$-partite, then, as $k$ tends to infinity, we have
\[ e^{\sqrt{k\log (\chi(H)-1)/(4r)}} \le \rl(H,k)\le(1-\pi(H)+o(1))^{-km(H)}.
\]
\end{thm}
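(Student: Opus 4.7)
The plan splits naturally into a lower bound and upper bound argument, which have rather different flavors. For the lower bound, I assume $\chi := \chi(H) \ge 3$, since the stated bound is trivial when $\chi - 1 = 1$. Given any list assignment $L: E(K_n^{(r)}) \to \binom{\NN}{k}$, I would sample, independently for each color $c$ appearing in some $L(e)$, a uniformly random map $\phi_c : V(K_n^{(r)}) \to [\chi-1]$. I then attempt to build an $L$-coloring by choosing, for each edge $e$, some $c_e \in L(e)$ such that $\phi_{c_e}$ is non-monochromatic on $e$. If this succeeds at every edge, then each color class $G_c$ is properly vertex-$(\chi-1)$-colored by $\phi_c$, so $\chi(G_c) \le \chi-1 < \chi(H)$ and hence $G_c$ is $H$-free by monotonicity of the chromatic number under subgraph containment. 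For each fixed edge $e$, the probability of failure is the probability that $\phi_c(e)$ is monochromatic for every $c \in L(e)$; by independence of the $\phi_c$'s across the $k$ colors in $L(e)$, this equals $(\chi-1)^{-(r-1)k}$. A union bound over the $\binom{n}{r}$ edges succeeds so long as $\binom{n}{r}(\chi-1)^{-(r-1)k} < 1$, yielding a lower bound of order $(\chi-1)^{(r-1)k/r}$. This is considerably stronger than, and therefore implies, the stated $e^{\sqrt{k\log(\chi-1)/(4r)}}$.

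For the upper bound, the plan is a random list construction. Fix a palette $[N]$ for an $N$ to be optimized, and sample each $L(e)$ as an independent uniform $k$-subset of $[N]$. The goal is to show that with positive probability over $L$, no $H$-free $L$-coloring exists. For a fixed candidate coloring $c: E(K_n^{(r)}) \to [N]$, the probability that $c$ is consistent with $L$ (i.e., $c(e) \in L(e)$ for every $e$) equals $(k/N)^{\binom{n}{r}}$. A first-moment argument over $H$-free colorings then requires bounding the number of such colorings. I would use the Tur\'an density to control the number of edges in each color class by $(\pi(H)+o(1))\binom{n}{r}$, so the total count of $H$-free colorings is not too large, while the density $m(H)$ should enter through the random-graph threshold $p = n^{-1/m(H)}$ at which $H$ begins to appear in $G(n,p)$; this dictates the optimal palette size $N \sim n^{1/m(H)}$. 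Combining these ingredients and optimizing $N$ should give the claimed $(1-\pi(H)+o(1))^{-km(H)}$ upper bound.

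The most delicate step, and the main obstacle I foresee, is the counting in the upper bound. A naive estimate on the number of $H$-free colorings over palette $[N]$---for instance, treating each color class independently via a container theorem and then multiplying over the $N$ colors---loses the interplay between $\pi(H)$ and $m(H)$ that is responsible for the factor $m(H)$ in the exponent. Recovering this factor cleanly, I expect, will require coupling the random list assignment with an auxiliary random graph at the critical density $p = k/N = n^{-1/m(H)}$ and applying a supersaturation or second-moment argument for the appearance of $H$---this is the step where the two Tur\'an-type parameters $\pi(H)$ and $m(H)$ have to be balanced against the list entropy, and I anticipate it being the most technical piece of the proof.
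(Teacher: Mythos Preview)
This theorem is not proved in the present paper at all; it is quoted from \cite{ABKKS2019List} as background, so there is no ``paper's own proof'' to compare against. That said, your proposal is worth commenting on because of how it relates to the paper's actual contribution.

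Your lower bound argument is correct, and it is far stronger than the stated bound $e^{\sqrt{k\log(\chi(H)-1)/(4r)}}$: you obtain an \emph{exponential} lower bound $(\chi(H)-1)^{(r-1)k/r}$. In fact, what you have written is essentially the argument of Proposition~\ref{prop:trianglelb} and Proposition~\ref{prop:weaklb} in this paper, specialized to the extremal graph $G$ being a complete balanced $(\chi(H)-1)$-partite $r$-graph. That is the paper's main new idea, and its purpose is precisely to \emph{improve} the $e^{\Omega(\sqrt{k})}$ bound of \cite{ABKKS2019List} to $e^{\Omega(k)}$. So you have not reproduced the proof of Theorem~\ref{thm:abkks-upper-lower}; you have instead rediscovered the core of Theorem~\ref{thm:general-lower}. (The paper then sharpens this further by replacing the union bound with the Local Lemma and by optimizing over all $H$-homomorphism-free $G$, which brings in $\pi(H)$ rather than $\chi(H)$.) The original $e^{\sqrt{k}}$ lower bound in \cite{ABKKS2019List} comes from a genuinely different and weaker argument.

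Your upper bound sketch is in the right spirit---random list assignment plus a first-moment count over $H$-free colorings is indeed how \cite{ABKKS2019List} proceeds---but as you yourself note, the counting step is not pinned down. The parameter $m(H)$ enters through the number of $H$-free $N$-colorings of $K_n^{(r)}$, which is handled in \cite{ABKKS2019List} via a container-type count; your description of ``coupling with an auxiliary random graph at density $n^{-1/m(H)}$'' is heuristic and would need to be replaced by an actual enumeration argument to become a proof.
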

Erd\H{o}s \cite{Erdos64} proved that an $r$-graph $H$ satisfies $\pi(H)=0$ if and only if $H$ is $r$-partite. If $H$ is $r$-partite, it is known that $R(H,k)$, and hence $\rl(H,k)$, grows polynomially in $k$. Thus, Theorem~\ref{thm:abkks-upper-lower} shows that $\rl(H,k)$ always grows at most exponentially in $k$ regardless of uniformity. In stark contrast, the same upper bound is not even known for the classical Ramsey number $R(K_3,k)$. For fixed $r$, $R(K_{r+1}^{(r)},k)$ grows at least as fast as a tower function in $k$ of height $r$ (see \cite{EHR}). This shows that the list Ramsey number can be much smaller than the corresponding classical Ramsey number.

The authors of \cite{ABKKS2019List} asked which side of Theorem~\ref{thm:abkks-upper-lower} is closer to the truth. Our main result shows that the exponential upper bound gives  the correct order of growth.

\begin{thm}
\label{thm:general-lower}If $H$ is an $r$-graph that is not $r$-partite, then for any $k\ge1$,
\[
\rl(H,k) \ge c_{r}\cdot(1-\pi(H))^{-k/(r-1)},
\]
where $c_{r}=((r-2)!/e)^{1/(r-1)}$.
\end{thm}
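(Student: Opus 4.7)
My plan is to establish the bound via a randomized construction analyzed by the Lov\'asz Local Lemma (LLL). Given a list assignment $L$ on $K_n^{(r)}$ with $n$ just below the claimed bound, I will choose for each color $c$ a random ``locally determined'' $H$-free subgraph $F_c \subseteq K_n^{(r)}$ and argue via LLL that with positive probability every edge $e$ lies in $F_c$ for some $c\in L(e)$; coloring each edge accordingly gives the desired $L$-coloring.

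The crucial ingredient is a finite auxiliary $r$-graph $G$ on $t$ vertices such that every blow-up of $G$ is $H$-free (equivalently, no homomorphism $H\to G$ is injective on each edge of $H$) and whose edge density $\rho = |E(G)|/\binom{t}{r}$ can be taken arbitrarily close to $\pi(H)$. The existence of such a template amounts to the identity $\pi^{\mathrm{hom}}(H) = \pi(H)$, which follows, via supersaturation for Tur\'an densities, from the blow-up identity $\pi(H[s]) = \pi(H)$ for large $s$. Given $G$, for each color $c$ and each vertex $v\in[n]$ I independently pick a uniformly random type $\tau_c(v)\in[t]$, and declare an edge $e$ to be \emph{$c$-admissible} if $\tau_c|_e$ is injective and $\{\tau_c(v):v\in e\}\in E(G)$. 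The set of $c$-admissible edges is then a sub-$r$-graph of the blow-up of $G$ via $\tau_c$, hence $H$-free.

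Let $B_e$ be the event that no $c\in L(e)$ makes $e$ $c$-admissible. Since the types $\tau_c$ are independent across colors, $\Pr[B_e] = (1-\rho')^k$ with $\rho' \to \rho$ as $t \to \infty$. The key local property is that $B_e$ is determined entirely by the random variables $\{\tau_c(v) : c\in L(e),\, v \in e\}$, so $B_e$ is mutually independent of the family $\{B_{e'} : e' \cap e = \emptyset\}$, regardless of list overlaps. Hence the LLL dependency degree is at most $\binom{n}{r} - \binom{n-r}{r} \le n^{r-1}/(r-2)!$. The symmetric LLL then produces a valid type assignment provided $e \cdot (1-\rho')^k \cdot n^{r-1}/(r-2)! \le 1$, which rearranges to $n \le c_r(1-\rho')^{-k/(r-1)}$. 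Letting $\rho' \to \pi(H)$ gives the claimed lower bound $R_\ell(H,k) \ge c_r(1-\pi(H))^{-k/(r-1)}$.

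The main obstacle I foresee is obtaining the hom-$H$-free template $G$ with density arbitrarily close to $\pi(H)$, equivalently the equality $\pi^{\mathrm{hom}}(H) = \pi(H)$. For $r=2$ this is immediate (take $G = K_{\chi(H)-1}$, whose balanced blow-up has density $1 - 1/(\chi(H)-1) = \pi(H)$), but for $r \ge 3$ it requires a careful supersaturation argument. Once the template is in hand, the remainder is a clean LLL calculation, in which the degree bound $\binom{n}{r} - \binom{n-r}{r} \le n^{r-1}/(r-2)!$ combines with the base $e$ of the LLL condition to yield precisely the constant $c_r = ((r-2)!/e)^{1/(r-1)}$.
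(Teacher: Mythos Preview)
Your overall strategy---assign to each color a uniform random map $\tau_c$ into a fixed $H$-homomorphism-free template $G$ and apply the Lov\'asz Local Lemma to the ``bad'' events $B_e$---is exactly the paper's approach, and your identification of the key extremal input $\pi_{\hom}(H)=\pi(H)$ (via supersaturation) matches the paper as well.

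There is, however, a genuine arithmetic gap that costs you the constant $c_r$. The claimed dependency bound
\[
\binom{n}{r}-\binom{n-r}{r}\;\le\;\frac{n^{r-1}}{(r-2)!}
\]
is false: already for $r=2$ the left side equals $2n-3$ while the right side is $n$. With the undirected dependency you set up (declaring $B_e$ and $B_{e'}$ dependent whenever $e\cap e'\ne\emptyset$), the correct estimate is
\[
\binom{n}{r}-\binom{n-r}{r}\;\le\;r\binom{n-1}{r-1}\;\le\;\frac{r}{r-1}\cdot\frac{n^{r-1}}{(r-2)!},
\]
which, when fed into $ep(d+1)\le 1$, only yields the weaker constant $\bigl((r-1)!/(re)\bigr)^{1/(r-1)}$ in place of $c_r$.

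The paper recovers the stated $c_r$ via a directed-LLL refinement that you are missing. For each edge $e$ it fixes a distinguished vertex $u_e\in e$ and puts $(e,e')$ in the dependency digraph only when $e'$ meets $e\setminus\{u_e\}$; the out-degree is then $\binom{n}{r}-\binom{n-r+1}{r}-1$, and now $d+1\le(r-1)\binom{n-1}{r-1}\le n^{r-1}/(r-2)!$ genuinely holds. The price is that one must verify the conditional bound $\Pr\bigl[B_e\mid \bigwedge_{(e,e')\notin E(D)}\overline{B_{e'}}\bigr]\le p$, and this is done by conditioning on the value $\phi_c(u_e)$ and lower-bounding the probability that $\phi_c(e)\in E(G)$ via the \emph{minimum degree} of $G$ rather than its edge density. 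That in turn requires a template with $\delta(G)\ge(\pi(H)-\eps)\,v(G)^{r-1}/(r-1)!$, which the paper produces from an extremal hom-$H$-free $r$-graph by a Zykov symmetrization step. Your simpler density-based calculation sidesteps this min-degree lemma, but only because of the incorrect degree bound; once that is fixed, either you accept the weaker constant or you need the paper's extra two ingredients.
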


Together with the upper bound in Theorem~\ref{thm:abkks-upper-lower}, this shows that $\rl(H,k)$ is exponential
in $k$ if and only if $\pi(H)>0$ (i.e., if and only if $H$ is not $r$-partite), and polynomial in $k$
otherwise. We remark that since $m(H) > \frac{1}{r-1}$ unless $H$ is $r$-partite, the exponential constant $m(H)$ in Theorem~\ref{thm:abkks-upper-lower} is always larger than the exponential constant $\frac{1}{r-1}$ in Theorem~\ref{thm:general-lower}, so our results do not determine $\lim_{k\rightarrow\infty} \rl(H,k)^{1/k}$, or even show that this limit exists. It would be interesting to determine this limit, even in just the case $H=K_3$, where Theorems~\ref{thm:abkks-upper-lower} and~\ref{thm:general-lower} give $\frac{1}{e} \cdot 2^k \le  \rl(K_3,k)\le (4+o(1))^k$. If one could show $\rl(K_3,k) \le 3.199^k$, comparing to the best known lower bound~\cite{XiZhExRa} for $R(K_3,k)$ would yield $\rl(K_3,k) < R(K_3,k)$.

The heart of our proof of Theorem~\ref{thm:general-lower} is a simple probabilistic argument. To illustrate the main idea, we demonstrate a weaker version of the argument in the case $H=K_3$.

\begin{prop}
\label{prop:trianglelb}
For any integer $k\ge 1$,
\[
\rl(K_3,k)> 2^{k/2}.
\]
\end{prop}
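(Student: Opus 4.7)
My plan is the probabilistic method. Fix $n=\lfloor 2^{k/2}\rfloor$ and any list assignment $L\colon E(K_n)\to \binom{\NN}{k}$; the goal is to exhibit an $L$-coloring $f$ of $K_n$ with no monochromatic triangle.

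First, I would rule out the naive uniform random scheme. If $f(e)$ is drawn uniformly and independently from $L(e)$ for each edge, then for any triangle $T$ with edges $e_1,e_2,e_3$,
\[
\Pr[T \text{ monochromatic}] \;=\; \sum_c\prod_{i=1}^{3}\Pr[f(e_i)=c] \;=\; \frac{|L(e_1)\cap L(e_2)\cap L(e_3)|}{k^3} \;\le\; \frac{1}{k^2},
\]
so $\E[\#\text{mono triangles}]\le\binom{n}{3}/k^{2}$, which is less than $1$ only when $n=O(k^{2/3})$ --- exponentially short of the target $2^{k/2}$. A correlated scheme is needed.

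The form of the target $2^{k/2}$ matches the birthday bound $\binom{n}{2}\cdot 2^{-k}<1$, which suggests the following. Assign each vertex $v\in V(K_n)$ an independent uniform label $\tau(v)\in\{0,1\}^k$, and define $f(e)$ for each edge $e=uv$ as a deterministic function of $(L(e),\tau(u),\tau(v))$, engineered so that whenever the labels $\{\tau(v)\}_{v\in V}$ are pairwise distinct, $f$ contains no monochromatic triangle. Given such a rule, the union bound $\Pr[\exists u\ne v\colon \tau(u)=\tau(v)]\le\binom{n}{2}\cdot 2^{-k}<1$ for $n\le 2^{k/2}$ immediately yields the proposition.

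The main obstacle --- and the heart of the proof --- is constructing such a coloring rule that works uniformly against adversarial $L$. A natural starting point is to exploit the XOR identity $\delta_{uv}\oplus\delta_{uw}=\delta_{vw}$ among pairwise differences $\delta_e := \tau(u)\oplus\tau(v)\in\mathbb{F}_2^k$, together with a Schur-type sum-free partition of $\mathbb{F}_2^k\setminus\{0\}$ into $k$ color classes (guaranteed by Schur's theorem). Composing this partition with per-edge bijections $\phi_e\colon [k]\to L(e)$ (possibly randomized) should produce a scheme whose monochromatic-triangle probability is controlled by $\binom{n}{2}\cdot 2^{-k}$. Translating the Schur-index obstruction into a genuine obstruction on the underlying colors, against the adversary's freedom to choose $L$, is the delicate step where I expect most of the work to lie.
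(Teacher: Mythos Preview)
Your proposal has a genuine gap: the deterministic rule you posit cannot exist. You ask for a function $g$ with $g(L,a,b)\in L$ such that for every triple of distinct labels $a,b,c\in\{0,1\}^k$ and every choice of lists $L_1,L_2,L_3\in\binom{\NN}{k}$, the values $g(L_1,a,b),\,g(L_2,a,c),\,g(L_3,b,c)$ are not all equal. But fix any distinct $a,b,c$ and consider the three choice functions $g_1,g_2,g_3\colon\binom{\NN}{k}\to\NN$ obtained by freezing the label pair. Each $g_i$ can omit at most $k-1$ colors from its range: if it omitted some $k$-set $S$, then $g_i(S)\in S$ would be a contradiction. Hence the union of the three complements has size at most $3(k-1)$, and since $\NN$ is infinite there is a color $x$ lying in all three ranges. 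The adversary then selects $L_1,L_2,L_3$ with $g_i(L_i)=x$, producing a monochromatic triangle regardless of how you engineer the sum-free partition or the bijections $\phi_e$. So the step you flag as ``delicate'' is not merely delicate---it is impossible in the framework you set up, and the birthday bound on label collisions is bounding the wrong bad event.

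The paper's argument keeps your union-bound shape $\binom{n}{2}\cdot 2^{-k}<1$ but indexes the random bits by \emph{colors} rather than by the coordinates $[k]$. For each color $c$ independently, pick a uniformly random balanced bipartition $G_c$ of $V(K_n)$; the bad event $B_e$ is that $e\notin E(G_c)$ for every $c\in L(e)$, which has probability at most $2^{-k}$ since each $c\in L(e)$ independently satisfies $\Pr[e\in E(G_c)]>\tfrac12$. A union bound over the $\binom{n}{2}<2^k$ edges then yields a choice of $(G_c)_c$ for which every edge $e$ has some $c\in L(e)$ with $e\in E(G_c)$; coloring $e$ by any such $c$ makes every color class bipartite and hence triangle-free. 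The key conceptual shift from your attempt is that the per-color randomness makes the ``deterministic rule'' trivial---pick any good color---because the triangle-freeness is baked into each $G_c$ separately, and the list structure only enters through independence across the $k$ colors in $L(e)$.
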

\begin{proof}
Let $n= 2^{k/2}$. We show that for every $L:E(K_n)\to \binom{\NN}{k}$, there is an $L$-coloring of the edges of $K_n$ with no monochromatic triangle. For each color $c\in \NN$, independently pick a complete spanning balanced bipartite graph $G_c$ on the vertices of $K_n$ uniformly at random. 

For each edge $e$, define $B_e$ to be the event that no color $c\in L(e)$ satisfies $e\in E(G_c)$. Since $\Pr[e\in E(G_c)]>\frac{1}{2}$ for each $c$, we have $\Pr[B_e]<2^{-k}$. There are $\binom{n}{2} < 2^k$ total edges in $E(K_n)$, so the union bound shows that with positive probability, none of the $B_e$ hold. In this case, for each edge $e$, pick a color $c\in L(e)$ such that $e \in E(G_c)$. Thus, there exists an $L$-coloring of $K_n$ where the edges in any given color $c$ form a bipartite graph. In particular, this $L$-coloring has no monochromatic triangles, as desired.
\end{proof}

This argument generalizes to give exponential lower bounds for $R_\ell(H,k)$ for any $r$-graph $H$ that is not $r$-partite. Instead of choosing $G_c$ to be a complete balanced bipartite graph, we take an $H$-free $r$-graph $G$ on $n$ vertices and choose each $G_c$ to be a copy of $G$ with the vertices permuted uniformly at random. Fixing $G$ with the maximum possible number of edges $\ex(n,H)$, we obtain the following lower bound.
\begin{prop}
\label{prop:weaklb}
If $H$ is an $r$-graph and $n < \big(1-\ex(n,H)/\binom{n}{r} \big)^{-k/r},$ then $\rl(H,k) > n$.
\end{prop}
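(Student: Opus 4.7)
The plan is to imitate the argument of Proposition \ref{prop:trianglelb}, replacing the complete balanced bipartite graph with a maximum $H$-free $r$-graph. Given any list assignment $L \colon E(K_n^{(r)}) \to \binom{\mathbb{N}}{k}$, fix an extremal $H$-free $r$-graph $G$ on vertex set $[n]$ with $e(G) = \ex(n,H)$, and for each color $c \in \mathbb{N}$ independently sample a uniformly random permutation $\sigma_c$ of $[n]$, letting $G_c := \sigma_c(G)$ be a random copy of $G$. Each $G_c$ is still $H$-free.

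The key calculation is that for any fixed edge $e \in E(K_n^{(r)})$, the preimage $\sigma_c^{-1}(e)$ is a uniformly random $r$-subset of $[n]$, so $\Pr[e \in E(G_c)] = e(G)/\binom{n}{r} = p$, where $p := \ex(n,H)/\binom{n}{r}$. Define the bad event $B_e$ that $e \notin E(G_c)$ for every $c \in L(e)$; by independence across colors, $\Pr[B_e] = (1-p)^k$.

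Next I would apply the union bound over the $\binom{n}{r}$ edges. The hypothesis $n < (1-p)^{-k/r}$ rearranges to $n^r(1-p)^k < 1$, which gives $\binom{n}{r}(1-p)^k < 1$, so with positive probability no $B_e$ occurs. On this event, for every edge $e$ I can select a color $c \in L(e)$ with $e \in E(G_c)$, producing an $L$-coloring in which the color-$c$ class is a subgraph of the $H$-free graph $G_c$. Hence no monochromatic copy of $H$ appears, and $\rl(H,k) > n$.

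There is no substantive obstacle; the only step that benefits from a moment of care is the symmetry argument that makes $\Pr[e \in E(G_c)]$ the same $p$ for every edge $e$, which is what allows us to avoid building a dense $H$-free host graph edge by edge and instead exploit an extremal $G$ wholesale through random relabeling.
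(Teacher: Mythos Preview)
Your proof is correct and follows exactly the approach sketched in the paper: take an extremal $H$-free $r$-graph $G$ on $n$ vertices, randomly permute it independently for each color, and apply a union bound over the $\binom{n}{r}<n^r$ edges using $\Pr[B_e]=(1-p)^k$. There is nothing to add.
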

Since $\ex(n,H)\binom{n}{r}^{-1}\to \pi(H)$ as $n$ grows large, this bound is already comparable to our final goal Theorem~\ref{thm:general-lower}, but with an inferior exponential constant.

Thus far, we have dealt with the case $\pi(H)>0$, when $H$ is not $r$-partite. In the case $\pi(H)=0$, it is proven in \cite[Theorem 8]{ABKKS2019List} that 
\begin{equation}\label{eq:bounds-r-partite}
   R(H, \lfloor ck/\log k\rfloor)\le R_\ell(H,k)\le R(H,k) 
\end{equation}
 for a constant $c=c(H)>0$. Assuming $\ex(n,H) = \Theta(n^{r - \eps(H)})$ for some constant $\eps(H)>0$, Proposition~\ref{prop:weaklb} implies that $R_\ell(H,k) = \Omega((k/\log k)^{1/\eps(H)})$, which implies the lower bound in (\ref{eq:bounds-r-partite}), and may be better by a polylogarithmic factor in some cases.

In Section~\ref{sec:generallb}, we prove Theorem~\ref{thm:general-lower} by strengthening the argument in the proof of Proposition~\ref{prop:trianglelb}, applying the Lov\'asz Local Lemma instead of using a union bound. A few technical complications arise for the sake of optimizing the constant $c_r$ and generalizing to hypergraphs. Afterwards, in Section~\ref{sec:conclusion}, we give some further applications of our method to other variants of Ramsey numbers. We systematically omit floor and ceiling signs whenever they are not crucial.

\section{Proof of Theorem~\ref{thm:general-lower}}

\label{sec:generallb}

We will use the following variant of the symmetric Lov\' asz Local Lemma; see \cite[Corollary 5.1.2]{AlSp} and the discussion afterwards. Unlike the standard version, we do not require full independence between events but only an upper bound on the relevant conditional probability.

\begin{lem}\label{lem:LLL}
Let $A_1,\ldots, A_n$ be events in an arbitrary probability space. Suppose there exists a directed graph $D= (V,E)$ on vertex set $V=[n]$ with maximum out-degree $d$ and for each $1\le i \le n$,
\begin{equation}\label{eq:LLL-dependence}
\Pr\Big[A_i | \bigwedge_{(i,j) \not \in E} \overline{A_j}\Big] \le p.
\end{equation}
If $ep(d+1)\le 1$, then $\Pr[\wedge_{i=1}^n \overline{A_i}] > 0$.
\end{lem}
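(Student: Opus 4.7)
The plan is to prove this variant of the symmetric Lov\'asz Local Lemma by the classical Alon--Spencer induction, adapted to the directed out-neighbor structure while using the conditional-probability hypothesis (\ref{eq:LLL-dependence}) in place of full mutual independence.

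The central claim, which I would prove by strong induction on $|S|$, is that for every $i \in [n]$ and every $S \subseteq [n]\setminus\{i\}$,
\[
\Pr\!\Bigl[A_i \,\Big|\, \bigwedge_{j \in S} \overline{A_j}\Bigr] \le \frac{1}{d+1}.
\]
Granting this, the chain rule yields
\[
\Pr\!\Bigl[\bigwedge_{i=1}^n \overline{A_i}\Bigr] = \prod_{i=1}^n \Pr\!\Bigl[\overline{A_i} \,\Big|\, \bigwedge_{j<i} \overline{A_j}\Bigr] \ge \Bigl(1-\tfrac{1}{d+1}\Bigr)^n > 0,
\]
which is the desired conclusion.

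For the inductive step, I would partition $S = S_1 \sqcup S_2$ with $S_1 = S \cap N^+(i)$ (so $|S_1|\le d$) and $S_2 = S \setminus S_1$, and exploit the identity
\[
\Pr\!\Bigl[A_i \,\Big|\, \bigwedge_{j\in S} \overline{A_j}\Bigr] = \frac{\Pr\!\bigl[A_i \wedge \bigwedge_{j \in S_1} \overline{A_j} \,\big|\, \bigwedge_{j \in S_2} \overline{A_j}\bigr]}{\Pr\!\bigl[\bigwedge_{j \in S_1} \overline{A_j} \,\big|\, \bigwedge_{j \in S_2} \overline{A_j}\bigr]}.
\]
The numerator I would bound above by $\Pr[A_i \mid \bigwedge_{j \in S_2} \overline{A_j}] \le p$, invoking (\ref{eq:LLL-dependence}) (noting that $S_2$ consists entirely of non-out-neighbors of $i$). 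The denominator I would bound below by $(1-\tfrac{1}{d+1})^{|S_1|} \ge (d/(d+1))^d \ge 1/e$, applying the inductive hypothesis successively to each $\overline{A_j}$ with $j \in S_1$. Combining these two estimates and invoking $ep(d+1)\le 1$ yields $\Pr[A_i \mid \bigwedge_{j \in S} \overline{A_j}] \le ep \le 1/(d+1)$, closing the induction.

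The only delicate point is the numerator bound. In the classical LLL proof one uses full mutual independence to replace $\Pr[A_i \mid \bigwedge_{j\in S_2}\overline{A_j}]$ by the unconditional $\Pr[A_i]\le p$. Here no such independence is available; instead, the hypothesis (\ref{eq:LLL-dependence}) is precisely the conditional inequality that is actually needed at this step of the induction -- exactly as observed in the discussion following Corollary~5.1.2 of \cite{AlSp} -- so the classical argument goes through verbatim with the conditional probability bound playing the role that the unconditional bound plays in the standard version. This is the sense in which ``mutual independence'' can be weakened to ``only an upper bound on the relevant conditional probability.''
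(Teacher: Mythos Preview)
The paper does not prove this lemma; it simply cites \cite[Corollary~5.1.2]{AlSp} and the discussion following it. Your sketch is exactly the standard Alon--Spencer induction, so in that sense it matches the paper's ``proof'' perfectly.

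One point deserves care. As literally written, (\ref{eq:LLL-dependence}) is a \emph{single} inequality per $i$: the conditioning is on the conjunction of $\overline{A_j}$ over \emph{all} $j$ with $(i,j)\notin E$. Your inductive step, however, invokes the bound $\Pr[A_i\mid\bigwedge_{j\in S_2}\overline{A_j}]\le p$ for an arbitrary \emph{subset} $S_2$ of the non-out-neighbors of $i$. That stronger ``for every subset'' hypothesis is what the Alon--Spencer proof actually uses and what the discussion after Corollary~5.1.2 isolates; it is also what the paper's application in Lemma~\ref{lem:lower} in fact verifies (there the bound is established pointwise over all conditionings on $\phi_c(u)$ for $u\notin e\setminus\{u_e\}$, hence for every such subset). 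So your reading is the intended one, but you should say explicitly that (\ref{eq:LLL-dependence}) is to be understood as holding for every subset of non-out-neighbors, not merely the full set; otherwise the numerator bound in your inductive step is not justified by the hypothesis as stated.
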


For an $r$-graph $G$, the minimum degree of $G$ is denoted by $\delta(G)$, the number of vertices of $G$ is denoted by $v(G)$, and the number of edges of $G$ is denoted by $e(G)$.
For $r$-graphs $H$ and $G$, we say that $G$ is \emph{$H$-homomorphism-free} if there is no homomorphism from $H$ to $G$.

\begin{lem}
\label{lem:lower}If $H,G$ are $r$-graphs such that $G$ is $H$-homomorphism-free, then for any $k\ge1$,
\[
\rl(H,k)>c_{r}\cdot\left(1-\frac{(r-1)!\cdot \delta(G)}{v(G)^{r-1}}\right)^{-k/(r-1)},
\]
where $c_{r}=((r-2)!/e)^{1/(r-1)}$.
\end{lem}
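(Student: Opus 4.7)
The plan is to prove that for any list assignment $L : E(K_n^{(r)}) \to \binom{\NN}{k}$ with $n$ at most slightly above $c_r(1-p_0)^{-k/(r-1)}$ (where $p_0 = (r-1)!\,\delta(G)/v(G)^{r-1}$), there exists an $L$-coloring of $K_n^{(r)}$ with no monochromatic copy of $H$, by applying the Lov\'asz Local Lemma (Lemma~\ref{lem:LLL}) to a natural random construction. For each color $c$, I would independently draw a uniform function $\pi_c:[n]\to V(G)$, with values $\pi_c(v)$ i.i.d.\ uniform over $V(G)$ and mutually independent across both $c$ and $v$. Set $G_c := \{S\in\binom{[n]}{r} : \pi_c(S)\in E(G)\}$; since any copy of $H$ inside $G_c$ yields a homomorphism $H\to G$ via $\pi_c$, homomorphism-freeness forces each $G_c$ to be $H$-free. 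For each edge $e\in\binom{[n]}{r}$, let $B_e$ be the bad event that no color $c\in L(e)$ has $e\in G_c$; if no $B_e$ occurs, we can assign each edge $e$ a color $c\in L(e)$ with $e\in G_c$, producing the desired monochromatic-$H$-free $L$-coloring.

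\medskip

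By the handshake lemma $r\cdot e(G)\ge v(G)\delta(G)$, one has $\Pr[e\in G_c] = r!\,e(G)/v(G)^r \ge (r-1)!\,\delta(G)/v(G)^{r-1} = p_0$, hence $\Pr[B_e]\le(1-p_0)^k$ by independence of the $\pi_c$. To obtain both the sharp exponent $k/(r-1)$ and the constant $c_r$, I would fix for each edge $e$ a distinguished vertex $v_e\in e$ (say the smallest in some ordering of $[n]$) and invoke Lemma~\ref{lem:LLL} with the directed dependency graph $D$ whose arcs are those $(e,e')$ with $v_e\in e'$ and $e\ne e'$. The out-degree of each vertex in $D$ is then $\binom{n-1}{r-1}-1$; using $\binom{n-1}{r-1}\le(n-1)^{r-1}/(r-1)!$, the LLL hypothesis $ep(d+1)\le 1$ translates into a bound on $n$ of the stated form (after absorbing lower-order terms into the $+1$ in $n-1$ vs.\ $n$).

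\medskip

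\textbf{Main obstacle.} The central technical step is verifying the conditional probability estimate $\Pr[B_e\mid \bigwedge_{(e,e')\notin D}\overline{B_{e'}}]\le(1-p_0)^k$ required by Lemma~\ref{lem:LLL}. Since any $e'$ with $v_e\notin e'$ has $B_{e'}$ measurable only with respect to the variables $\pi_c(w)$ for $w\ne v_e$, the conditioning event $A$ is independent of the values $\pi_c(v_e)$, and $\pi_c(v_e)$ therefore remains uniform on $V(G)$ given $A$. Writing $q_c(Z)$ for the probability that $\pi_c(v_e)$ completes the fixed $(r-1)$-tuple $Z_c:=\pi_c(e\setminus\{v_e\})$ to an edge of $G$, one obtains $\Pr[B_e\mid A]=\E\bigl[\prod_{c\in L(e)}(1-q_c(Z))\bigm| A\bigr]$, and the marginal expectation $\E[q_c]$ equals $\Pr[e\in G_c]\ge p_0$. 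The subtle point is that conditioning on $A$ could in principle bias the random $(r-1)$-tuples $Z_c$ toward configurations of low $G$-degree and thereby inflate $\Pr[B_e\mid A]$; handling this cleanly, via an AM--GM-style convexity step exploiting the independence of the $\pi_c$ across different colors $c$ (or an averaging trick over the choice of $v_e\in e$), is the most delicate part of the proof and is where the specific constant $c_r$ gets pinned down.
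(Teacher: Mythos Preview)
Your random construction, bad events $B_e$, and overall LLL strategy match the paper's, but your dependency digraph is oriented the wrong way, and this is exactly why the ``main obstacle'' you describe is a real gap rather than a technicality. With your digraph, the non-neighbors of $e$ are the $e'$ not containing $v_e$, so after conditioning the only fresh randomness left at $e$ is the single value $\pi_c(v_e)$; the tuple $Z_c=\pi_c(e\setminus\{v_e\})$ is already fixed. The pointwise conditional probability that $\pi_c(v_e)$ completes $Z_c$ to an edge of $G$ is the \emph{codegree} of $Z_c$ divided by $v(G)$, and there is no lower bound on codegrees in terms of $\delta(G)$: indeed this codegree is $0$ whenever $Z_c$ has a repeated entry or happens to land on an $(r-1)$-set not contained in any edge. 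Hence $\Pr[B_e\mid\mathcal{F}]$ can equal $1$ on a set of positive probability, and since Lemma~\ref{lem:LLL} requires the bound $\le p$ to hold for the conditional probability (equivalently, for the essential supremum of $\Pr[B_e\mid\mathcal{F}]$), no AM--GM or averaging device can rescue the argument. Your unconditional computation $\E\bigl[\prod_c(1-q_c)\bigr]=\prod_c(1-\E[q_c])\le(1-p_0)^k$ is correct but irrelevant once you condition on $A$, which can both bias the $Z_c$'s and correlate them across colors.

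The paper's fix is simply to swap the roles: distinguish $u_e\in e$ and declare $(e,e')\in E(D)$ iff $e'$ meets $e\setminus\{u_e\}$. Then the non-neighbors of $e$ avoid $e\setminus\{u_e\}$, so after conditioning the values $\phi_c(w)$ for $w\in e\setminus\{u_e\}$ are still i.i.d.\ uniform while $\phi_c(u_e)=v_c$ is fixed. Now the conditional success probability for each color is at least $(r-1)!\deg_G(v_c)/v(G)^{r-1}\ge (r-1)!\,\delta(G)/v(G)^{r-1}=p_0$ \emph{pointwise}, giving $\Pr[B_e\mid\mathcal{F}]\le(1-p_0)^k$ immediately. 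The price is a larger out-degree $d=\binom{n}{r}-\binom{n-r+1}{r}-1\le n^{r-1}/(r-2)!$ rather than your $\binom{n-1}{r-1}-1$, which is precisely why the constant in the lemma is $c_r=((r-2)!/e)^{1/(r-1)}$ and not the $((r-1)!/e)^{1/(r-1)}$ your digraph would have yielded.
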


\begin{proof}
Let 
\[
n=c_{r}\cdot\left(1-\frac{(r-1)!\cdot \delta(G)}{v(G)^{r-1}}\right)^{-k/(r-1)}.
\]
Our goal is to show $\rl(H,k)> n$. That is, given any list assignment
$L:E(K_{n}^{(r)})\rightarrow\binom{\NN}{k}$, we would like to
show there is an $L$-edge-coloring of $K_{n}^{(r)}$ with no monochromatic
copy of $H$. To each $c\in\NN$, assign a uniform random map $\phi_{c}:V(K_{n}^{(r)})\rightarrow V(G)$, write $\phi_c(e) \coloneqq \{\phi_c(u) : u\in e\}$,
and define for each $e\in E(K_{n}^{(r)})$ the restricted list $L'(e)\coloneqq \{c\in L(e):\phi_{c}(e)\in E(G)\}$. Let $B_e$ be the event $L'(e) = \emptyset$. We will apply Lemma~\ref{lem:LLL} to the events $B_e$ to show that with positive probability, no $B_e$ occurs.

For each edge $e\in E(K_n^{(r)})$, arbitrarily distinguish a single vertex $u_e\in e$, and let $D$ be the directed graph on vertex set $V(D)\coloneqq E(K_n^{(r)})$ such that $(e,e') \in E(D)$ if and only if $e \cap e' \not\subseteq \{ u_e\}$. We claim that the conditions of Lemma~\ref{lem:LLL} are satisfied for the family of events $\{B_e\}_{e\in E(K_n^{(r)})}$ with this directed graph $D$,
\[
    d \coloneqq \binom{n}{r} - \binom{n-r+1}{r} - 1 \textnormal{, and } p \coloneqq \left(1-\frac{(r-1)!\cdot \delta(G)}{v(G)^{r-1}}\right)^{k}.
\]
Since $D$ is $d$-regular, the fact that $D$ has maximum out-degree $d$ is clear. It remains to check condition (\ref{eq:LLL-dependence}) holds for the family of events $B_e$, i.e. that 
\begin{equation}\label{eq:LLL-condition-B}
\Pr\Big[B_e | \bigwedge_{(e,e') \not \in E(D)} \overline{B_{e'}}\Big] \le p.
\end{equation}

By the definition of $D$, if $(e,e')\not \in E(D)$ then $B_{e'}$ only depends on the values of $\phi_c(u)$ for $u \not \in e\setminus \{u_e\}$. Thus, to prove (\ref{eq:LLL-condition-B}), it suffices to show that for any fixed choices of the values of $\phi_c(u)$ for all $c\in \NN$ and $u\not \in e\setminus \{u_e\}$, the conditional probability of $B_e$ is at most $p$. Furthermore, $B_e$ is mutually independent of all the random variables $\phi_c(u)$ with $u\not \in e$, so we only need to condition on the choices of $(\phi_c(u_e))_{c\in \NN}$.

For each $c\in \NN$, fix an arbitrary $v_c\in V(G)$. We claim that
\begin{equation}\label{eq:degree-lower}
\Pr[c \in L'(e) | c \in L(e) \textnormal{ and } \phi_c(u_e) = v_c] \ge \frac{(r-1)!\cdot \delta(G)}{v(G)^{r-1}}.
\end{equation}
To see this, observe that the choices of $\phi_c(w)$ are independent of those of $\phi_c(u_e)$ for each $w\in e\setminus\{u_e\}$. Since there are at least $\delta(G)$ edges of $G$ containing $u_e$, there are at least $(r-1)!\cdot \delta(G)$ total choices of the $(r-1)$-tuple $(\phi_c(w))_{w\in e\setminus\{u_e\}}$ for which $\phi_c(e)$ forms an edge of $E(G)$. Since the values of this tuple are chosen uniformly at random out of $V(G)^{r-1}$, we obtain inequality (\ref{eq:degree-lower}).

Since the events for distinct $c$ are independent, and $B_{e}$ is the event $L'(e)=\emptyset$, we thus have
\begin{equation}\label{eq: deduction}
\Pr[B_{e}|\forall c\in \NN, \phi_c(u_e)=v_c]\le \left(1-\frac{(r-1)!\cdot \delta(G)}{v(G)^{r-1}}\right)^{k} = p
\end{equation}
for each $e$, conditional on any choice of $v_c\in V(G)$ for each color $c$. This proves inequality (\ref{eq:LLL-condition-B}). Finally,
with our choices of $n$, $p$, and $d$, we have
\[
ep(d+1) \le e \left(1-\frac{(r-1)!\cdot \delta(G)}{v(G)^{r-1}}\right)^{k} \cdot \frac{n^{r-1}}{(r-2)!} \le 1,
\]
and so the conditions of Lemma~\ref{lem:LLL} are satisfied, and we find that with positive probability, all $B_e$ do not hold. In other words, there exists some choice of the maps $\phi_{c}$ such that $L'(e)\ne\emptyset$
for all $e$. Arbitrarily color each $e$ by some
$c\in L'(e)\subseteq L(e)$ to obtain an $L$-coloring of $K_{n}^{(r)}$
where the edges in any given color $c$ form a subgraph which has
a homomorphism to $G$. Since $G$ is $H$-homomorphism-free, this $L$-coloring has no monochromatic copy of $H$, completing the proof.
\end{proof}
It remains to maximize the quantity $(r-1)!\delta(G)/v(G)^{r-1}$ over all $r$-graphs
$G$ that are $H$-homomorphism-free. We will prove the following lemma using a standard supersaturation result combined with Zykov's symmetrization trick \cite{Zykov}.

\begin{lem}\label{lem:extremal-mindeg}
For each $\eps>0$ and $r$-graph $H$ which has at least one edge, there exists an $r$-graph $G$ which is $H$-homomorphism-free and
\[
\delta(G) \ge (\pi(H)-\eps)\cdot \frac{v(G)^{r-1}}{(r-1)!}.
\]
\end{lem}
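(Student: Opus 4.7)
The plan is to follow the hint by combining Zykov's symmetrization with a supersaturation-based existence statement, and then to take an appropriate Lagrangian-weighted blow-up to realize the desired minimum-degree bound. The key identity I would use is $\pi^{\mathrm{hom}}(H) = \pi(H)$, where $\pi^{\mathrm{hom}}(H)$ denotes the Tur\'an density restricted to $H$-homomorphism-free $r$-graphs.

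\emph{Step 1 (Starting graph).} By the definition of $\pi(H)$ together with the identity above (which is a standard consequence of supersaturation), for any $\delta > 0$ and $n$ sufficiently large there exists an $H$-homomorphism-free $r$-graph $G^*$ on $n$ vertices with $e(G^*) \ge (\pi(H) - \delta)\binom{n}{r}$. Choose $\delta = \eps/4$.

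\emph{Step 2 (Zykov symmetrization).} Iteratively apply Zykov's move to $G^*$: while there exist vertices $u, v$ that do not lie in any common edge and whose link $(r-1)$-graphs differ, replace $v$'s link by $u$'s link (choosing $d(u) \ge d(v)$ so that the edge count does not decrease). Zykov's move preserves $H$-homomorphism-freeness, because any homomorphism from $H$ into the post-move graph, composed with the natural projection sending $v$ to $u$, yields a homomorphism into the pre-move graph. The process terminates at a graph $G^{**}$ whose vertex set partitions into twin classes of identical links, with any two vertices in distinct classes lying in a common edge. Equivalently, $G^{**}$ is a blow-up of a quotient $r$-graph $\tilde G$ on $m \le n$ vertices, and $\tilde G$ is also $H$-homomorphism-free.

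\emph{Step 3 (Lagrangian-weighted blow-up).} Let $\vec x^* \in \Delta^{m-1}$ maximize the Lagrangian $\lambda(\tilde G) = \max_{\vec x \in \Delta^{m-1}} \sum_{e \in E(\tilde G)} \prod_{i \in e} x_i$, and pass to the support of $\vec x^*$ so that $\vec x^*$ lies in the interior of its simplex. Take the blow-up $G$ of $\tilde G$ with class sizes $\lfloor N x^*_i \rfloor$ for $N$ large. Then $G$ inherits $H$-homomorphism-freeness from $\tilde G$, and by the KKT optimality conditions at an interior Lagrangian maximum, every vertex of $G$ has the same degree, equal to $(r\lambda(\tilde G) + o(1)) N^{r-1}$ as $N \to \infty$. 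Since $r!\lambda(\tilde G)$ is at least the edge density of $G^{**}$, which is at least $\pi(H) - \eps/4$, choosing $N$ sufficiently large yields $\delta(G) \ge (\pi(H) - \eps) v(G)^{r-1}/(r-1)!$, as required.

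\emph{Main obstacle.} The delicate ingredient is the identity $\pi^{\mathrm{hom}}(H) = \pi(H)$ in Step 1. Its proof combines the defining near-extremal $H$-free construction with a Zykov-style argument on the Lagrangian: picking a Lagrangian-optimal weighting of a near-extremal $H$-free graph $G_0$, one iteratively merges pairs of vertices in the weight support that do not lie in any common edge---an operation that, by multilinearity, does not decrease the Lagrangian---and argues via supersaturation that the resulting restricted subgraph is $H$-homomorphism-free with the same Lagrangian. Verifying this rigorously for general $r$-graphs, and ensuring that the Zykov merging does not introduce homomorphic copies of $H$, is the main technical step behind Step 1.
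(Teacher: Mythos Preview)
Your argument is correct, but it is considerably more elaborate than the paper's. Both proofs invoke the identity $\pi_{\hom}(H)=\pi(H)$ (cited from Keevash's survey, via supersaturation) to obtain an $H$-homomorphism-free $r$-graph of density close to $\pi(H)$. From there the paper proceeds in one stroke: among all $H$-homomorphism-free $r$-graphs on $n$ vertices, pick one with the maximum number of edges; if some vertex $v$ had degree below $(\pi(H)-\eps)\,n^{r-1}/(r-1)!$, delete $v$ and add a clone $u'$ of a maximum-degree vertex $u$. A short count (max degree at least $r\,e(G)/n$, and at most $\binom{n}{r-2}$ edges through both $u$ and $v$) shows this strictly increases the edge count, while cloning a vertex preserves $H$-homomorphism-freeness, contradicting maximality. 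That is the entire proof: a single Zykov clone-and-replace step. Your route instead carries out full Zykov symmetrization to reach a blow-up of a quotient $\tilde G$, and then re-blows-up $\tilde G$ with Lagrangian-optimal weights, appealing to KKT at an interior maximizer to force near-regularity. This works and even gives a nearly regular $G$, but the Lagrangian/KKT machinery is not needed for the stated conclusion, and in fact your Step~2 is redundant given Step~3 (one may take the Lagrangian of $G^*$ directly, since evaluating at the uniform weight already gives $r!\lambda(G^*)\ge e(G^*)/\binom{n}{r}^{-1}\cdot n^{-r}\cdot r!\approx$ density, and the support-restriction is a subgraph of $G^*$, hence still $H$-hom-free). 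Two minor caveats: termination of iterated Zykov for $r$-graphs needs a word (e.g.\ pass to an edge-maximal $H$-hom-free graph from the outset), and your ``Main obstacle'' sketch of $\pi_{\hom}(H)=\pi(H)$ via Lagrangian merging is not the standard argument; the usual route is blow-up invariance of the Tur\'an density from supersaturation.
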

\begin{proof}
We may assume $\pi(H) \geq \eps$ as otherwise we may pick $G$ to be an empty $r$-graph. 

Define $\ex_{\hom}(H,n)$ to be the maximum number of edges in an $r$-graph $G$ on $n$ vertices which is $H$-homomorphism-free, and define $\pi_{\hom}(H)\coloneqq\lim_{n\rightarrow\infty}\binom{n}{r}^{-1}\ex_{\hom}(H,n)$. As shown in 
\cite{Keevash} using a supersaturation argument, we have $\pi(H)=\pi_{\hom}(H)$ for all $r$-graphs $H$.

In particular, for $n$ sufficiently large there exists an $r$-graph $G$ on $n$ vertices  which is $H$-homomorphism-free and 
\[
e(G)\ge \left(\pi(H) - \frac{\eps}{3}\right) \binom{n}{r} \ge \left(\pi(H) -\frac{2\eps}{3}\right) \frac{n^r}{r!}.
\]
Among all such $r$-graphs $G$ with $n$ vertices, pick a $G$ maximizing $e(G)$. We claim that 
\[
\delta(G) \ge \delta \coloneqq (\pi(H)-\eps)\cdot \frac{n^{r-1}}{(r-1)!}.
\]
Suppose this is false, and there exists a vertex $v\in V(G)$ with degree smaller than $\delta$. Let $u$ be a vertex of maximum degree in $G$. Consider the $r$-graph $G'$ obtained by removing $v$ and inserting a copy $u'$ of $u$. The degree of $u$ in $G$ is at least $r \cdot e(G)/n \ge \delta + \frac{\eps}{3} n^{r-1}/(r-1)!$ and the number of edges containing both $u$ and $v$ is at most $\binom{n}{r-2} < \frac{\eps}{3} n^{r-1}/(r-1)!$ for $n$ sufficiently large, so the degree of $u$ in $G\setminus \{v\}$ is strictly larger than $\delta$. It follows that $e(G') > e(G)$, as more edges are added than removed.

As $G'$ is obtained from a subgraph of $G$ by copying a vertex, we see that $G'$ also is $H$-homomorphism-free, contradicting the maximality of $G$. Hence $\delta(G)\ge \delta$, as desired.
\end{proof}

We now complete the proof of Theorem \ref{thm:general-lower}.
\begin{proof}[Proof of Theorem \ref{thm:general-lower}.]
 By Lemma \ref{lem:extremal-mindeg}, for every $\eps>0$ there exists
an $r$-graph $G$ which is $H$-homomorphism-free and 
\[
\delta(G) \ge (\pi(H)-\eps)\cdot \frac{v(G)^{r-1}}{(r-1)!}.
\]
Applying Lemma~\ref{lem:lower} to this pair of $H, G$ gives $\rl(H,k)>c_{r}\cdot\left(1-\pi(H)+\eps\right)^{-k/(r-1)}.$ Taking $\eps \to 0$, we obtain $\rl(H,k)\ge c_{r}\cdot\left(1-\pi(H)\right)^{-k/(r-1)}$ as desired.
\end{proof}

\section{Concluding remarks}
\label{sec:conclusion}
We conclude with a brief discussion of two related problems to which our methods can be applied: list Ramsey numbers for families of graphs, and the list variants of size Ramsey numbers and degree Ramsey numbers. These are also interesting questions for hypergraphs, but we restrict our attention to the graph case ($r=2$) in this section for simplicity.
\subsection{List Ramsey numbers for families of graphs}
While Theorem~\ref{thm:general-lower} does not determine the exponential constant in $\rl(H,k)$ for any particular $H$, we can determine $\rl(\HH,k)$ up to a constant factor for certain {\it families} $\HH$. Here, if $\HH$ is a family of graphs, $\rl(\HH,k)$ is the minimum $n$ such that there exists an $L:E(K_n)\to \binom{\NN}{k}$ for which every $L$-coloring of $K_n$ contains a monochromatic copy of some $H\in \HH$. 

\begin{thm}\label{thm:families}
If $s\ge 2$ and $\HH_s$ is the family of graphs with chromatic number greater than $s$, then
\[
\frac{1}{e} \cdot s^k \le \rl(\HH_s, k) \le s^k + 1.
\]
\end{thm}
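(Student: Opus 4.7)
The plan is to establish the two inequalities of Theorem~\ref{thm:families} independently.

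For the lower bound, I would apply a family version of Lemma~\ref{lem:lower} with $G$ taken to be the complete balanced $s$-partite graph $K_{s\times m}$ on $v(G)=sm$ vertices, for any large $m$. Then $\delta(G)=(s-1)m$, so $1-\delta(G)/v(G)=1/s$. Since $G$ is itself $s$-colorable, composing any homomorphism $H \to G$ with a proper $s$-coloring of $G$ would yield an $s$-coloring of $H$; hence no $H$ with $\chi(H)>s$ admits a homomorphism to $G$. The proof of Lemma~\ref{lem:lower} carries over verbatim to families: the Lovász Local Lemma argument produces, with positive probability over the random maps $\phi_c: V(K_n) \to V(G)$, an $L$-coloring in which every monochromatic subgraph admits a homomorphism to $G$, and therefore has chromatic number at most $s$, so it contains no member of $\HH_s$. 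Plugging $1-\delta(G)/v(G)=1/s$ and $c_2=1/e$ into Lemma~\ref{lem:lower} then gives $\rl(\HH_s, k) > (1/e)\cdot s^k$.

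For the upper bound, take $n=s^k+1$ and the constant list assignment $L(e) = [k]$ for every edge $e$; an $L$-coloring is then simply a $k$-edge-coloring of $K_n$. Suppose toward contradiction that no color class belongs to $\HH_s$, i.e., every monochromatic subgraph is $s$-colorable. For each color $c \in [k]$, fix a proper vertex $s$-coloring $f_c: V(K_n) \to [s]$ of the monochromatic subgraph in color $c$, and let $F = (f_1,\ldots,f_k): V(K_n) \to [s]^k$. Since $|V(K_n)| = s^k+1 > s^k$, by pigeonhole there exist distinct $u,v$ with $F(u)=F(v)$; if the edge $\{u,v\}$ has color $c$, then $f_c(u)=f_c(v)$ contradicts properness of $f_c$ on the color-$c$ subgraph. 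Hence some color class must lie in $\HH_s$, establishing $\rl(\HH_s, k) \le s^k+1$.

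The main conceptual point is the observation that Lemma~\ref{lem:lower}, though stated for a single forbidden graph $H$, uses $H$ only through the final clause ``no monochromatic $H$,'' and so it extends to any family $\HH$ provided $G$ admits no homomorphism from any member of $\HH$. Past that observation, both bounds are essentially direct: the lower bound plugs $G = K_{s\times m}$ into the extended lemma, and the upper bound reduces to a one-line pigeonhole argument.
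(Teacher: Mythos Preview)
Your proposal is correct and follows essentially the same approach as the paper, which only sketches the argument. For the lower bound, the paper notes that the construction underlying Theorem~\ref{thm:general-lower} (i.e., Lemma~\ref{lem:lower}) actually avoids every monochromatic graph of chromatic number larger than $\chi(G)$; your explicit choice $G=K_{s\times m}$ makes this precise and yields $1-\delta(G)/v(G)=1/s$ exactly, without the $\eps\to 0$ detour through Lemma~\ref{lem:extremal-mindeg}. For the upper bound, the paper simply cites $\rl(\HH_s,k)\le R(\HH_s,k)=s^k+1$, and your pigeonhole argument is the standard proof of this classical fact.
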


We omit the details of the proof. The lower bound follows from the fact that our construction for Theorem~\ref{thm:general-lower} to avoid a monochromatic $H$ also avoids every graph with chromatic number at least $\chi(H)$. The upper bound follows from the fact that $\rl(\HH_s, k) \le R(\HH_s, k) = s^k + 1$. We conjecture that the upper bound in Theorem~\ref{thm:families} is the truth.

\subsection{Size and degree variants of list Ramsey numbers }

Our lower bounds generalize naturally to the settings of size and degree Ramsey numbers, which are well-studied variants of the classical Ramsey number introduced in \cite{EFRS} and \cite{BEL} respectively. Recall that while the classical Ramsey number $R(H,k)$ is the minimum number of \emph{vertices} in a graph $G$ for which any $k$-edge-coloring has a monochromatic copy of $H$, the size Ramsey number of $H$ is the minimum number of \emph{edges} in such a graph $G$, and the degree Ramsey number is the minimum max-degree of such a graph $G$. The growth 
of size Ramsey numbers and degree Ramsey numbers of bounded degree graphs have been of much interest \cite{CFS2015,Kang,KMW,KRSS,Jiang,RoSz,Tait}. 

We say a graph $G$ is {\it $k$-color list Ramsey for $H$} if there exists $L:E(G)\to \binom{\NN}{k}$ for which every $L$-coloring of the edges of $G$ contains a monochromatic copy of $H$. The \emph{$k$-color list size Ramsey number} of $H$, denoted $R_{\ell,e}(H,k)$, is the minimum number of edges of a graph $G$ which is $k$-color list Ramsey for $H$. 
By comparing the definitions, we have $R_{\ell, e}(H,k)\le \binom{R_{\ell}(H,k)}{2}$. We remark that the proof of Proposition~\ref{prop:weaklb} can be adapted to show that $R_{\ell,e}(H,k) \ge (1-\pi(H))^{-k}$. Indeed, if $G$ is a graph with $n$ vertices and $m$ edges with 
\[
m < (1-\pi(H))^{-k} \le \left(1-\ex(n,H)/\binom{n}{2} \right)^{-k},
\]
then $G$ is not $k$-color list Ramsey for $H$.

The \emph{$k$-color list degree Ramsey number} of $H$, denoted $R_{\ell, d}(H,k)$, is the minimum $\Delta$ for which there is a graph $G$ of maximum degree $\Delta$ which is $k$-color list Ramsey for $H$.
The proof of Theorem~\ref{thm:general-lower} can be adapted to show that $R_{\ell,d}(H,k)=\Omega((1-\pi(H))^{-k})$ for every graph $H$ with at least one edge. Indeed, if $G$ has maximum degree $\Delta \le c (1-\pi(H))^{-k}$ for an appropriate $c=c(H)>0$, then the proof of Lemma~\ref{lem:lower} proceeds similarly with the dependency digraph having maximum degree $d\le 2\Delta$, noting that any two bad events $B_e, B_{e'}$ are independent if $e$ and $e'$ are disjoint. Thus $G$ is not list Ramsey for $H$. This generalizes to the case of $r$-graphs, yielding the same bound. 
\vspace{3mm}

\noindent {\bf Acknowledgments.} The authors are grateful to József Balogh, Matija Buci\'c and Yuval Wigderson for helpful conversations, and the anonymous referees for helpful comments.


\begin{thebibliography}{1}
\bibitem{ABKKS2019List} N. Alon, M. Buci\'c, T. Kalvari, E. Kuperwasser, and T. Szab\'o, List Ramsey numbers, {\it J. Graph Theory} {\bf 96} (2021), 109--128. 

\bibitem{AlSp} N. Alon and J. Spencer, {\bf The probabilistic method}, third edition, Wiley, New York, 2008.

\bibitem{BOF}
bof (https://mathoverflow.net/users/43266/bof), List Ramsey numbers?, URL (version: 2019-02-21): https://mathoverflow.net/q/298778.

\bibitem{BEL}
 S. Burr, P. Erd\H{o}s, and L. Lov\'asz, On graphs of Ramsey type, {\it Ars Combin.} {\bf 1} (1976), 167–190.
 
 \bibitem{CFS2015}
D. Conlon, J. Fox and B. Sudakov, Recent developments in graph Ramsey theory, in Surveys in
combinatorics 2015, London Math. Soc. Lecture Note Ser., Vol. 424, 49--118, Cambridge University Press, Cambridge, 2015.

\bibitem{Erdos64} 
 P. Erd\H{o}s, On extremal problems of graphs and generalized graphs, {\it Israel J. Math.} {\bf 2} (1964), 183--190.

\bibitem{EFRS}
 P. Erd\H{o}s, R. J. Faudree, C. C. Rousseau, and R. H. Schelp, The size Ramsey number,
{\it Period. Math. Hungar.} {\bf 9} (1978), 145–161.

\bibitem{EHR}
 P. Erd\H{o}s, A. Hajnal and R. Rado, Partition relations for cardinal numbers, {\it Acta Math.
Acad. Sci. Hungar.} {\bf 16} (1965), 93--196.

\bibitem{EMT}
P. Erd\H{o}s, R. J. McEliece and H. Taylor, Ramsey bounds for graph products, {\it  Pacific J. Math.} {\bf 37} (1971), 45–46.

\bibitem{Kang} R. J. Kang and G. Perarnau, Decomposition of bounded degree graphs into $C_4$-free subgraphs, {\it European J. Combin.} {\bf 44} (2015), 99--105

\bibitem{Keevash}P. Keevash, Hypergraph Tur\'an Problems, in Surveys in combinatorics 2011, London Math. Soc. Lecture Note Ser., Vol. 392, 83--140, Cambridge University Press, Cambridge, 2011.

\bibitem{KMW} W. B. Kinnersley, K. G. Milans, and D. B. West, Degree Ramsey numbers of graphs, {\it Combin. Probab. Comput.} {\bf 21} (2012), 229--253. 

\bibitem{KRSS} Y. Kohayakawa, V. R\"odl, M. Schacht, and E. Szemer\'edi, Sparse partition universal graphs for graphs of bounded degree, {\it Adv. Math.} {\bf 226} (2011), 5041--5065.

\bibitem{Jiang} T. Jiang, K. G. Milans, and D. B. West, Degree Ramsey numbers for cycles and blowups of trees, {\it European J. Combin.} {\bf 34} (2013), 414--423. 

\bibitem{Nesetril} J. Nešetřil and M. Rosenfeld, I. Schur, CE Shannon and Ramsey numbers, a short story, \textit{Discrete Math.} {\bf 229} (2001), 185--195.

\bibitem{RoSz} V. R\"odl and E. Szemer\'edi, On size Ramsey numbers of graphs with bounded degree, {\it Combinatorica} {\bf 20} (2000), 257--262.

\bibitem{Schur} I. Schur, Über die Kongruenz $x^m+y^m=z^m \pmod p$, \emph{Jber. Deutsch. Math.-Verein.} {\bf 25} (1916), 114--116.

\bibitem{Tait} M. Tait, Degree Ramsey numbers for even cycles. \emph{Discrete Math.} {\bf 341} (2018), 104--108. 

\bibitem{XiZhExRa} X. Xu, X. Zheng, G. Exoo and S. Radziszowski, Constructive lower bounds on classical
multicolor Ramsey numbers, \textit{Electron. J. Combin.} {\bf 11} (2004), R35, 24 pp.

\bibitem{Zykov}
A. A. Zykov, On some properties of linear complexes, {\it Mat. Sbornik N.S.}
{\bf 24} (1949), 163--188. 

\end{thebibliography}
\end{document}